\numberwithin{equation}{section}
\patchcmd{\ttlh@hang}{\parindent\z@}{\parindent\z@\leavevmode}{}{}
\patchcmd{\ttlh@hang}{\noindent}{}{}{}
\newcommand\numberthis{\addtocounter{equation}{1}\tag{\theequation}}
\theoremstyle{plain}
\newtheorem{theorem}{Theorem}[section]
\newtheorem{lemma}[theorem]{Lemma}
\newtheorem{proposition}[theorem]{Proposition}
\newtheorem{corollary}[theorem]{Corollary}
\theoremstyle{definition}
\theoremstyle{remark}
\newtheorem{remark}[theorem]{Remark}
\DeclareMathOperator{\Aut}{Aut}
\DeclareMathOperator{\ind}{ind}
\title{Integrability properties of quasi-regular representations of $NA$ groups}
\author{Jordy Timo van Velthoven}
\address{Delft University of Technology,
Mekelweg 4, Building 36,
2628 CD Delft, The Netherlands.}
\email{j.t.vanvelthoven@tudelft.nl}
\keywords{Admissibility, Integrable representation, Quasi-regular representation, NA group.}
\subjclass[2020]{22E15, 22E27, 43A80, 44A35}
\begin{document}

\begin{abstract}
Let $G = N \rtimes A$,  where $N$ is a graded Lie group and $A = \mathbb{R}^+$ acts on $N$ via homogeneous dilations. The quasi-regular representation $\pi = \mathrm{ind}_A^G (1)$ of $G$ can be realised to act on $L^2 (N)$. It is shown that for a class of analysing vectors the associated wavelet transform defines an isometry from $L^2 (N)$ into $L^2 (G)$ and that the integral kernel of the corresponding orthogonal projector has polynomial off-diagonal decay. The obtained reproducing formula is instrumental for obtaining decomposition theorems for function spaces on nilpotent groups.
\end{abstract}

\maketitle

\section{Introduction}
Let $N$ be a connected, simply connected nilpotent Lie group and let $A = \mathbb{R}^+$ act on $N$ via automorphic dilations. The semi-direct product
$G = N \rtimes A$ acts unitarily on $L^2 (N)$ via the quasi-regular representation
$\pi = \ind_A^G (1)$ of $G$. For $g \in L^2 (N)$, the associated wavelet transform $V_{g} : L^2 (N) \to L^{\infty} (G)$ is defined as
\[
V_{g} f (x,t) = \langle f, \pi(x,t) g \rangle, \quad (x,t) \in G.
\]
A vector $g \in L^2 (N)$ is said to be \emph{admissible} if $V_{g}$ is an isometry from $L^2 (N)$ into $L^2 (G)$.

Given an admissible vector $g \in L^2 (N)$, the orthogonal projector $P$ from $L^2 (G)$ onto the closed subspace $V_{g} (L^2 (N)) \subset L^2 (G)$ is given by right convolution
$P(F) = F \ast V_{g} g$.
In particular, an element $F \in V_{g} (L^2 (N))$, i.e., $F = V_g f$ for some $f \in L^2 (N)$, satisfies the reproducing formula
\begin{align} \label{eq:repro}
V_{g} f = V_{g} f \ast V_{g} g.
\end{align}
The existence of admissible vectors for irreducible, square-integrable representations $\pi$ is automatic by the orthogonality relations \cite{duflo1976regular}, but a non-trivial problem for reducible representations. For $N = \mathbb{R}^d$ and general dilation groups $A \leq \mathrm{GL}(d, \mathbb{R})$, the admissibility of quasi-regular representations is well-studied, see, e.g. \cite{fuehr2010generalized, laugesen2002characterization,  bruna2015characterizing} and the references therein. For non-commutative groups $N$, the admissibility problem is considered in, e.g.
\cite{currey2012admissibility, currey2007admissibility, oussa2013admissibility, fuehr2005abstract}.

This note is concerned with admissible vectors that are also integrable:
A vector $g \in L^2 (N)$ is said to be \emph{integrable} if $\Delta_G^{-1/2} V_g g \in L^1 (G)$, where $\Delta_G : G \to \mathbb{R}^+$ denotes the modular function on $G$. The significance of integrably admissible vectors is that $F := \Delta_G^{-1/2} V_g g$ forms a \emph{projection} in $L^1 (G)$ by \eqref{eq:repro}, that is, $F  = F \ast F = F^*$, with $F^* := \Delta_G^{-1} \overline{F ( \cdot ^{-1})}$.

The construction of projections in $L^1 (G)$ arising from matrix coefficients is an ongoing research topic,
 and such projections provide (if they exist) a powerful tool for studying problems in non-commutative harmonic analysis.
 Among others, they play a vital role in the theory of atomic decompositions in Banach spaces \cite{feichtinger1989banach, grochenig1991describing}.

For the affine group $G = \mathbb{R} \rtimes \mathbb{R}^+$, the construction of projections in $L^1 (G)$ goes back to \cite{eymard1979transformation}. The papers \cite{kaniuth1996minimal, grochenig1992compact, currey2016integrable} consider groups $G = \mathbb{R}^d \rtimes A$ and provide criteria for the explicit construction of projections in $L^1 (G)$ based on the dual action of $A$ on  $\mathbb{R}^d$;
see also \cite{fuehr2015coorbit, fuehr2019coorbit}. The techniques of \cite{kaniuth1996minimal, grochenig1992compact} were used in \cite{schulz1999extensions} for the Heisenberg group $N = \mathbb{H}_1$ acted upon by automorphic dilations. For a stratified group $N$ with canonical dilations, the existence of smooth admissible vectors was investigated in \cite{geller2006continuous}, although not linked to integrability.

The main concern of this note is the integrability of $\pi = \ind^{N \rtimes A}_A$ when $N$ is a (possibly, non-stratified) graded Lie group. The main result obtained is the following:

\begin{theorem} \label{thm:intro}
Let $G = N \rtimes A$, where $N$ is a graded Lie group and $A = \mathbb{R}^+$
acts on $N$ via automorphic dilations. The quasi-regular representation $\pi = \ind^G_A (1)$ admits integrably admissible vectors, i.e., there exist vectors $g \in L^2 (N)$ satisfying $\Delta_G^{-1/2} V_{g} g \in L^1 (G)$ and
\[
\int_G | \langle f, \pi(x,t) g \rangle |^2 \; d\mu_G (x,t) = \| f \|_{2}^2, \quad \quad \text{for all} \;\; f \in L^2 (N).
\]
The integrably admissible vector $g$ can be chosen to be Schwartz with all moments vanishing, in which case $V_{g} g \in L^1_w (G)$ for any polynomially bounded weight $w : G \to [1,\infty)$.
\end{theorem}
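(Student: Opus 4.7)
The plan is to realise $\pi$ explicitly on $L^2(N)$ as $\pi(x,t) g(y) = t^{-Q/2} g(\delta_{t^{-1}}(x^{-1} y))$, where $Q$ is the homogeneous dimension of $N$ and $\delta_t$ the homogeneous dilation by $t$. Under this identification the matrix coefficient takes the convolution form
\[
V_g g(x,t) \;=\; t^{Q/2} \int_N g(x \cdot \delta_t z)\, \overline{g(z)}\, dz,
\]
and the modular function is $\Delta_G(x,t) = t^{-Q}$, so the integrability condition in Theorem~\ref{thm:intro} amounts to showing that $V_g g$ is integrable against $t^{Q/2}\, d\mu_G(x,t)$.

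For the construction of $g$ I would use functional calculus of a positive Rockland operator $L$ on $N$ of homogeneous degree $\nu$. Choose $\eta \in C_c^\infty((0,\infty))$ with $\int_0^\infty |\eta(s)|^2\,ds/s$ normalised to $\nu$, and set $g := \eta(L)\delta_e$, the right convolution kernel of $\eta(L)$. Hulanicki's theorem ensures $g \in \mathcal{S}(N)$. The vanishing of $\eta$ to infinite order at $0$ translates, via the expression of the polynomial moments of $g$ as derivatives of its group Fourier transform at the trivial representation (where the spectrum of $\sigma(L)$ is concentrated at $0$), into the vanishing of all moments of $g$.

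Admissibility follows from the Plancherel theorem on $N$: under the group Fourier transform, the dilation $g \mapsto t^{-Q/2} g(\delta_{t^{-1}}\cdot)$ intertwines fibrewise with the spectral rescaling $\eta(\sigma(L)) \mapsto \eta(t^\nu \sigma(L))$ for $\sigma \in \hat N$, so the Calderón admissibility identity reduces, on each fibre, to the scalar equality $\int_0^\infty |\eta(t^\nu \lambda)|^2\,dt/t = 1$ for $\lambda$ in the spectrum of $\sigma(L)$; this holds by the change of variables $s = t^\nu \lambda$ and the normalisation of $\eta$. The standard Schur-type argument then yields $\|V_g f\|_{L^2(G)} = \|f\|_{L^2(N)}$.

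The main technical obstacle is the $L^1_w$ estimate. I would prove the two-sided kernel bound
\[
|V_g g(x,t)| \;\leq\; C_{M,N_0}\,\min(t,\,t^{-1})^M\,(1+|x|)^{-N_0} \qquad \text{for all } M,N_0 \geq 0,
\]
with $|\cdot|$ any fixed homogeneous quasi-norm on $N$. The spatial decay is inherited from the Schwartz decay of $g$; the decay as $t \to 0^+$ comes from Taylor-expanding $g(x\cdot\delta_t z)$ in $z$ around the identity and using the vanishing moments of $g$ to kill every polynomial term, producing the factor $t^M$; the decay as $t \to \infty$ is obtained by the symmetric substitution $w = \delta_t z$ followed by the same Taylor argument applied to $\overline{g(\delta_{t^{-1}} w)}$. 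Once this bound is in hand, choosing $M$ larger than $Q/2$ and $N_0$ larger than $Q$ plus the polynomial degree of $w$ makes both $\Delta_G^{-1/2}\, V_g g$ and $w\cdot V_g g$ integrable against $d\mu_G$. The delicate point is carrying out the Taylor expansion on a non-abelian group: one has to invoke the polynomial nature of the Baker--Campbell--Hausdorff formula together with its compatibility with the homogeneous dilations, which is exactly why the graded (rather than merely nilpotent) hypothesis is essential.
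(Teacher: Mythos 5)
Your proposal is correct and follows essentially the same route as the paper: the window is the convolution kernel of a Calder\'on-normalised spectral multiplier of a positive Rockland operator (admissibility reducing to the scalar identity $\int_0^\infty |\eta(t^\nu\lambda)|^2\,dt/t=1$, which the paper carries out with the spectral theorem for $\mathcal{L}$ on $L^2(N)$ rather than fibrewise through the group Plancherel formula), and integrability follows from the two-sided decay of $V_g g$ obtained by Taylor expansion against the vanishing moments together with the symmetry $|V_gg(x,t)|=|V_gg((x,t)^{-1})|$. The only inessential slip is in your final sentence: the homogeneous Taylor inequality and its compatibility with dilations hold on any homogeneous group, and the graded hypothesis is used instead to guarantee the existence of a positive \emph{homogeneous} hypoelliptic (Rockland) operator to which Hulanicki's theorem applies.
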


Admissible vectors that are Schwartz with all vanishing moments are known to exist already for stratified Lie groups \cite[Corollary 1]{geller2006continuous}. Theorem \ref{thm:intro} provides a modest extension of this result to general graded Lie groups, and complements it with integrability properties of the associated matrix coefficients.
More explicit (point-wise) localisation estimates for the matrix coefficients on homogeneous groups are also obtained; see Section \ref{sec:matrixcoefficient} below for details.

The proof method for Theorem \ref{thm:intro} resembles the construction of Littlewood-Paley functions and Calder\'on-type reproducing formulae. Most techniques can already be found in some antecedent form in \cite{folland1982hardy} as pointed out throughout the text.
Particular use is made of the (non-stratified) Taylor inequality and Hulanicki's theorem for Rockland operators. The use of a Rockland operator instead of a sub-Laplacian is essential for the proof method as the latter are no longer always homogeneous for non-stratified groups. The exploitation of homogeneity is the reason that the strategy fails for non-graded homogeneous groups (see Remark \ref{rem:nonhomogeneous}).

The motivation for Theorem \ref{thm:intro} stems from the study of function spaces, and is twofold:

(i) The question whether there exist vectors yielding a reproducing kernel with suitable off-diagonal decay on  homogeneous groups was posed in \cite[Remark 6.6(a)]{grochenig1991describing}, where it was mentioned that this is a representation-theoretic problem rather than one of function spaces.
The use of such vectors for function space theory, however, is due to the fact that the techniques \cite{grochenig1991describing} yield frames and atomic decompositions for Besov-Triebel-Lizorkin spaces.
The same holds true for the recent sampling theorems in \cite{romero2021dual}. The admissible vectors provided by Theorem \ref{thm:intro} satisfy the integrability
conditions assumed in \cite{grochenig1991describing, romero2021dual} (see Section \ref{sec:analysing}), and Theorem \ref{thm:intro} solves the problem mentioned in \cite[Remark 6.6(a)]{grochenig1991describing} for graded Lie groups.

(ii) The differentiability properties of functions in terms of Banach spaces are well-studied on stratified Lie groups for several classes of spaces, including
Lipschitz spaces \cite{folland1979lipschitz, krantz1982lipschitz}, Sobolev spaces \cite{folland1975subelliptic, saka1979besov}, Besov spaces \cite{christensen2012coorbit, fuehr2012homogeneous, saka1979besov} and Triebel-Lizorkin spaces \cite{folland1982hardy, hu2013homogeneous}. More recently, there has been
an interest in such spaces on possibly non-stratified graded Lie groups, see, e.g. \cite{bahouri2012refined, fischer2017sobolev, cardona2017multipliers, bruno2021homogeneous}.
This was a motivation to obtain Theorem \ref{thm:intro} for graded groups, as it allows to apply the techniques \cite{grochenig1991describing, romero2021dual} discussed in (i) to these new classes of spaces. Moreover, even for stratified groups, the integrability properties provided by Theorem \ref{thm:intro} allow to apply the techniques \cite{romero2021dual} and bridge a gap between what has been established on the locality of the sampling expansions for stratified groups in \cite{christensen2012coorbit, fuehr2012homogeneous, grochenig1991describing, geller2006continuous} and for the classical setting $N = \mathbb{R}^d$ in \cite{frazier1991littlewood, gilbert2002smooth}; see \cite{gilbert2002smooth,romero2021dual} for more details on the discrepancy between \cite{grochenig1991describing} and  \cite{frazier1991littlewood, gilbert2002smooth, romero2021dual}.

The details on the applications of Theorem \ref{thm:intro} to various functional spaces are beyond the scope of the present paper, and will be deferred to subsequent work.

\subsection*{Notation} The open and closed positive half-lines in $\mathbb{R}$ are denoted by $\mathbb{R}^+ = (0,\infty)$ and $\mathbb{R}_0^+ = [0,\infty)$, respectively. For functions $f_1, f_2 : X \to \mathbb{R}_0^+$, it is written $f_1 \lesssim f_2$ if there exists a constant $C > 0$ such that $f_1 (x) \leq C f_2(x)$ for all $x \in X$.
The space of smooth functions on a Lie group $G$ is denoted by $C^{\infty} (G)$
and the space of test functions by $C_c^{\infty} (G)$.

\section{Preliminaries on homogeneous Lie groups} \label{sec:prelim}
This section provides background on homogeneous groups. Standard references for the theory are the books  \cite{folland1982hardy,fischer2016quantization}.
\subsection{Dilations}
Let $\mathfrak{n}$ be a real $d$-dimensional Lie algebra.
A \emph{family of dilations} on $\mathfrak{n}$ is a one-parameter family $\{ D_t \}_{t > 0}$
of automorphisms $D_t : \mathfrak{n} \to \mathfrak{n}$ of the form $D_t := \exp (A \ln t)$,
where $A : \mathfrak{n} \to \mathfrak{n}$ is a diagonalisable linear map with positive eigenvalues $v_1, ..., v_d$. If a Lie algebra $\mathfrak{n}$ is endowed with a family of dilations, then it is nilpotent.

A \emph{homogeneous group} is a connected, simply connected nilpotent Lie group $N$ whose Lie algebra $\mathfrak{n}$ admits a family of dilations. The number $Q := v_1 + ... + v_d$ is the \emph{homogeneous dimension} of $N$.
The exponential map $\exp_N : \mathfrak{n} \to N$ is a diffeomorphism, providing a global coordinate system on $N$. Dilations $\{D_t \}_{t > 0}$ can be transported to a one-parameter group of automorphisms of $N$, which will be denoted by $\{ \delta_t \}_{t > 0}$. The associated action of $t \in \mathbb{R}^+$ on $x \in N$ will often simply be written as $t x = \delta_t (x)$.

A \emph{graded group} is a connected, simply connected nilpotent Lie group $N$ whose Lie algebra $\mathfrak{n}$ admits an  $\mathbb{N}$-gradation $\mathfrak{n} = \bigoplus_{j = 1}^{\infty} \mathfrak{n}_j$,
where $\mathfrak{n}_j$, $j = 1, 2, ...,$ are vector subspaces of $\mathfrak{n}$, almost all equal to $\{0\}$, and satisfying $[\mathfrak{n}_j, \mathfrak{n}_{j'}] \subset \mathfrak{n}_{j+j'}$ for $j,j' \in \mathbb{N}$. If, in addition, $\mathfrak{n}_1$ generates $\mathfrak{n}$, the group $N$ is \emph{stratified}. Canonical dilations $D_t : \mathfrak{n} \to \mathfrak{n}$, $t > 0$, can be defined through a gradation as $D_t (X) = t^j X$ for $X \in \mathfrak{n}_j$, $j \in \mathbb{N}$.

Henceforth, a homogeneous group $N$ will be fixed with dilations $D_t := \exp (A \ln t)$. Haar measure will be denoted by $\mu_N$. The eigenvalues
$v_1, ..., v_d$ of $A$ will be listed in increasing order and it will be assumed (without loss of generality) that $v_1 \geq 1$. In addition, a basis $X_1, ..., X_d$ of $\mathfrak{n}$ such that $A X_j = v_j X_j$ for $j = 1, ..., d$ will be fixed throughout.

 \subsection{Homogeneity}
 A function $f : N  \to \mathbb{C}$ is called \emph{$\nu$-homogeneous} ($\nu \in \mathbb{C}$) if $f \circ \delta_t = t^{\nu} f$ for $t > 0$. For all measurable functions $f_1, f_2 : N \to \mathbb{C}$,
 \[
 \int_N f_1(x) (f_2 \circ \delta_t) (x) \; d\mu_N (x) = t^{-Q} \int_N (f_1 \circ \delta_{1/t} )(x) f_2(x) \; d\mu_N (x)
 \]
 provided the integral is convergent. The map $f \mapsto f \circ \delta_t$ is naturally extended to distributions.

 A linear operator $T : C_c^{\infty} (N) \to (C_c^{\infty} (N))'$ is said to be homogeneous of degree $\nu \in \mathbb{C}$ if $T (f \circ \delta_t) = t^{\nu} (Tf) \circ \delta_t$ for all $f \in C_{c}^{\infty} (N)$ and $t > 0$.

 A \emph{homogeneous quasi-norm} on $N$ is a continuous function $|\cdot| : N \to [0,\infty)$ that is
 symmetric, $1$-homogeneous and definite.
 If $|\cdot|$ is a homogeneous quasi-norm on $N$, there is a constant $C > 0$ such that
 $ |x y | \leq C(|x| + |y|) $
 for all $x, y \in N$.

\subsection{Derivatives and polynomials}
A basis element $X_j \in \mathfrak{n}$ acts as a left-invariant vector field on $\mathfrak{n}$ by
\[
X_j f (x) = \frac{d}{ds} \bigg|_{s=0} f(x \exp_N(sX_j))
\]
for $f \in C^{\infty} (N)$ and $x \in N$.
The first-order left-invariant differential operator $X_j$ is homogeneous of degree $v_j$. For a multi-index $\alpha \in \mathbb{N}_0^d$, higher-order differential operators are defined by $X^{\alpha} := X_1^{\alpha_1} X_2^{\alpha_2} \cdots X_d^{\alpha_d}$.
The algebra of all left-invariant differential operators on $N$ is denoted by $\mathcal{D} (N)$.

A function $P : N \to \mathbb{C}$ is a \emph{polynomial} if $P \circ \exp_N$ is a polynomial on $\mathfrak{n}$. Denoting by $\xi_1, ..., \xi_d$ a dual basis of $X_1, ..., X_d$,
the system $\eta_j = \xi_j \circ \exp_N^{-1}$, $j = 1, ..., d$, forms a global coordinate system on $N$.
Each $\eta_j : N \to \mathbb{C}$ forms a polynomial on $N$, and any polynomial $P$ on $N$ can be written uniquely as
\begin{align} \label{eq:polynomial}
P = \sum_{\alpha \in \mathbb{N}_0^d} c_{\alpha} \eta^{\alpha},
\end{align}
where all but finitely many $c_{\alpha} \in \mathbb{C}$ vanish and $\eta^{\alpha} := \eta_1^{\alpha_1} \eta_2^{\alpha_2} \cdots \eta_d^{\alpha_d}$ for a multi-index $\alpha \in \mathbb{N}_0^d$.
The homogeneous degree of $\alpha \in \mathbb{N}_0^d$ is defined as
$
[\alpha] := v_1 \alpha + \cdots + v_d \alpha_d
$
and the homogeneous degree of a polynomial $P$ written as \eqref{eq:polynomial} is
$
d(P) := \max \{ [\alpha] : \alpha \in \mathbb{N}_0^d \; \text{with} \; c_{\alpha} \neq 0 \}.
$

For any $k \geq 0$, the set of polynomials $P$ on $N$ such that $d(P) \leq k$ is denoted by $\mathcal{P}_k$.

\subsection{Schwartz space}
A function $f : N \to \mathbb{C}$ belongs to the Schwartz space $\mathcal{S}(N)$ if $f \circ \exp_N$ is a Schwartz function on $\mathfrak{n}$. A family of semi-norms on $\mathcal{S}(N)$ is given by
\[
\| f \|_{\mathcal{S}, K} = \sup_{|\alpha| \leq K, x \in N} (1+|x|)^K |X^{\alpha} f(x)|, \quad K \in \mathbb{N}_0.
\]
For simplicity, the parameter $K$ is sometimes suppressed from the notation $\| \cdot \|_{\mathcal{S}, K}$ and it is simply written $\| \cdot \|_{\mathcal{S}}$.
The closed subspace of $\mathcal{S}(N)$ of functions with all moments vanishing is defined by
\[
\mathcal{S}_0 (N) = \bigg\{ f \in \mathcal{S}(N) \; : \; \int_N x^{\alpha} f(x) \; d\mu_N (x) = 0, \quad \forall \alpha \in \mathbb{N}_0^d \bigg\}.
\]
For arbitrary $f \in \mathcal{S}(N)$, it will be written $\check{f} (x) := \overline{f(x^{-1})}$ and $f_t(x) := t^{-Q} f(t^{-1} x)$ for $t > 0$.

The dual space $\mathcal{S}'(N)$ of $\mathcal{S}(N)$ is the space of tempered distributions on $N$. If $f \in \mathcal{S}'(N)$ and $\varphi \in \mathcal{S}(N)$, the conjugate-linear evaluation is denoted by $\langle f, \varphi \rangle$. If well-defined, the evaluation is also written as $\langle f, \varphi \rangle = \int_N f (x) \overline{\varphi(x)} \; d\mu_N (x)$ and extends the $L^2$-inner product.
Convolution is defined by
$
f \ast \varphi (x) := \langle f, \check{\varphi}(x^{-1} \cdot) \rangle$ and
$ \varphi \ast f (x) := \langle f, \check{\varphi}(\cdot x^{-1}) \rangle$ for $x \in N$.

\section{Matrix coefficients of quasi-regular representations} \label{sec:matrixcoefficient}
This section is devoted to point-wise estimates and integability properties of the matrix coefficients of a quasi-regular representation.

\subsection{Quasi-regular representation}
Let $N$ be a homogeneous Lie group and let $A = \mathbb{R}^+$ be the multiplicative group. Then $A$ acts on $N$ via automorphic dilations $A \ni t \mapsto \delta_t \in \Aut(N)$.
The semi-direct product $G = N \rtimes A$ is defined with via the operations
\[
(x, t) (y, u) = (x \delta_t (y), t u), \quad (x, t)^{-1} = (\delta_{t^{-1}} (x^{-1}), t^{-1}).
\]
Identity element in $G$ is $e_G = (e_N, 1)$.
The group $G$ is an exponential Lie group, that is, the exponential map $\exp_G : \mathfrak{g} \to G$ is a diffeomorphism, see, e.g. \cite[Proposition 5.27]{fuehr2005abstract}.

The quasi-regular representation $\pi = \ind^G_A (1)$ of $G$ acts unitarily on $L^2 (N)$ by
\[
\pi(x, t) f = t^{- Q/2} f (t^{-1} (x^{-1} \cdot)), \quad (x, t) \in N \times A,
\]
for $f \in L^2 (N)$. Note that $\pi (x,t) = L_x D_t$, where $L_x f = f(x^{-1} \cdot)$ and $D_t f = t^{-Q/2} f(t^{-1} (\cdot))$.

A detailed account on the representation theory of quasi-regular representations of exponential groups can be found in  \cite{lipsman1989harmonic, currey2007admissibility, oussa2013admissibility}, but these results will not be used in this paper.

\subsection{Point-wise estimates} For $f_1, f_2 \in L^2 (N)$, denote the associated matrix coefficient by
\[
V_{f_2} f_1 (x,t) = \langle f_1, \pi(x,t) f_2 \rangle, \quad (x,t) \in N \rtimes A.
\]
The following result provides point-wise estimates for a class of matrix coefficients.

\begin{proposition} \label{prop:pointwise}
Let $f_1, f_2 \in \mathcal{S}_0 (N)$ and $K, M \in \mathbb{N}$ be arbitrary.
\begin{enumerate}[(i)]
\item For all $(x, t) \in N \rtimes A$ with $t \leq 1$,
\begin{align} \label{eq:smallT}
|V_{f_2} f_1 (x,t) | \lesssim  t^{Q/2+M} (1+|x|)^{-K} \| f_1 \|_{\mathcal{S}}  \| f_2 \|_{\mathcal{S}}.
\end{align}
\item For all $(x,t) \in N \rtimes A$ with $t \geq 1$,
\begin{align} \label{eq:largeT}
|V_{f_2} f_1 (x,t) | \lesssim  t^{-(Q/2 + M)} (1 +  |x|)^{-K} \|f _1 \|_{\mathcal{S}} \|f_2\|_{\mathcal{S}}.\end{align}
\end{enumerate}
The implicit constants in \eqref{eq:smallT} and \eqref{eq:largeT} are group constants that depend further only on $M, K$.
\end{proposition}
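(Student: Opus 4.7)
The plan is to rewrite
\[
V_{f_2} f_1(x,t) = t^{Q/2} \int_N f_1(x \delta_t w) \overline{f_2(w)} \, d\mu_N(w),
\]
obtained via the substitution $y = x \delta_t w$, and then to exploit the vanishing-moment hypothesis together with a homogeneous Taylor expansion. The key identity driving the proof is that, for any $\alpha \in \mathbb{N}_0^d$,
\[
X^\alpha_w \bigl[ f_1(x \delta_t w) \bigr] = t^{[\alpha]} (X^\alpha f_1)(x \delta_t w),
\]
which follows from $\delta_t \exp_N(s X_j) = \exp_N(s t^{v_j} X_j)$ and the left-invariance of $X_j$. This identity is what produces the required power of $t$.

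For part (i), with $t \le 1$, I would apply the (graded) Folland--Stein Taylor inequality to $w \mapsto f_1(x \delta_t w)$ at $w = e_N$, truncated at homogeneous degree $\le M$. This produces a polynomial $P_x$ of homogeneous degree $\le M$ in $w$ plus a remainder bounded by a sum of $|w|^{[\beta]}$ times $\sup_{|z| \le b|w|} |X^\beta_w [f_1(x \delta_t z)]|$, with $[\beta]$ slightly exceeding $M$. Since $f_2 \in \mathcal{S}_0(N)$ annihilates every polynomial, $\int_N P_x(w) \overline{f_2(w)} \, d\mu_N(w) = 0$, so only the remainder contributes. The derivative identity above then converts each remainder term into a factor $t^{[\beta]} \ge t^M$ times an $L^\infty$-bound on $X^\beta f_1$ along the dilated orbit. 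Using the Schwartz decay of $f_1$ and $f_2$ and splitting the $w$-integral according to whether $b|w| \le |x|/(2C)$ (so that $|x \delta_t z| \gtrsim |x|$ by the quasi-triangle inequality) or $b|w| > |x|/(2C)$ (where the rapid decay of $f_2$ alone carries the estimate), one extracts a factor $(1+|x|)^{-K}$ for any desired $K$, provided sufficiently high Schwartz semi-norms of $f_1, f_2$ are used. Combined with the $t^{Q/2}$ prefactor, this gives \eqref{eq:smallT}.

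Part (ii) reduces to part (i) via the adjoint identity
\[
V_{f_2} f_1(x,t) = \overline{V_{f_1} f_2\bigl(\delta_{1/t}(x^{-1}), 1/t\bigr)}.
\]
For $t \ge 1$ one has $1/t \le 1$ and $|\delta_{1/t}(x^{-1})| = |x|/t$, so applying (i) with the parameters $(M+K, K)$ in place of $(M, K)$ yields
\[
|V_{f_2} f_1(x,t)| \lesssim t^{-(Q/2 + M + K)} \bigl(1 + |x|/t\bigr)^{-K} \|f_1\|_{\mathcal{S}} \|f_2\|_{\mathcal{S}}.
\]
The elementary bound $(1 + |x|/t)^{-K} = t^K (t + |x|)^{-K} \le t^K (1+|x|)^{-K}$, valid for $t \ge 1$, absorbs the excess $t^K$ and leaves \eqref{eq:largeT}. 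The main technical obstacle I anticipate is the bookkeeping required to make the graded Taylor remainder behave as cleanly as in the stratified case, since the dilation weights $v_j$ need not be integers and one must choose a Taylor order for which the minimal $[\beta]$ exceeding it still dominates $M$; this is a routine but careful adaptation of the standard Folland--Stein argument.
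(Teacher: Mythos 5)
Your proposal is correct and follows essentially the same route as the paper: a homogeneous Taylor expansion killed by the vanishing moments of $f_2$, with the factor $t^{M}$ extracted from the homogeneity of the dilated derivatives (the paper pulls $t^{[\alpha]}$ out of $|ty|^{[\alpha]}$ after a change of variables, which is the same computation as your chain-rule identity), the spatial decay $(1+|x|)^{-K}$ obtained from Schwartz decay (the paper uses a Peetre-type inequality where you split the integral, a cosmetic difference), and part (ii) deduced from part (i) via the symmetry $|V_{f_2}f_1(x,t)| = |V_{f_1}f_2((x,t)^{-1})|$ with $M$ replaced by $M+K$ to absorb the excess power of $t$.
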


\begin{proof}
Throughout the proof, a Schwartz semi-norm $\| \cdot \|_{\mathcal{S}, N}$ is simply denoted by $\| \cdot\|_N$.

Let $K, M \in \mathbb{N}$ and let $P = P_{x,M} \in \mathcal{P}_M$ denote the Taylor polynomial of $f \in \mathcal{S} (N)$ at $x \in N$ of homogeneous degree $M$.
By
Taylor's inequality \cite[Theorem 3.1.51]{fischer2016quantization}, there exist constants $c, C > 0$ such that for all
$x,y \in N$,
\[
|f(xy) - P(y) | \leq C \sum_{\substack{|\alpha| \leq M' + 1 \\
[\alpha] > M}} |y|^{[\alpha]} \sup_{|z| \leq c^{M' + 1} |y|} |(X^{\alpha} f)(xz)|,
\]
where $M' := \max\{ | \alpha|  :  \alpha \in \mathbb{N}_0^d \; \text{with} \; [\alpha] \leq M \}$.
For $|\alpha| \leq M' + 1$ and $x, y \in N$,
\begin{align*}
\sup_{|z| \leq c^{M' + 1} |y|} |(X^{\alpha} f)(xz)|
&\leq \| f \|_{K + M' + 1} \sup_{|z| \leq  c^{M' + 1} |y|} (1+|xz|)^{-K} \\
&\lesssim \| f \|_{K + M' + 1} \sup_{|z| \leq  c^{M' + 1} |y|} (1+|x|)^{-K} (1+|z|)^{K} \\
&\lesssim \| f \|_{K + M' + 1}  (1+|x|)^{-K} (1+|y|)^{K}, \\
\end{align*}
where the second line follows from the Peetre-type inequality \cite[Lemma 1.10]{folland1982hardy}. Thus,
\begin{align} \label{eq:global_taylor}
|f(xy) - P(y) | \lesssim \| f \|_{K + M' + 1}  (1+|x|)^{-K}  \sum_{\substack{|\alpha| \leq M' + 1 \\
[\alpha] > M}} |y|^{[\alpha]} (1+|y|)^{K}
\end{align}
for all $x,y \in N$.

(i) Let $(x, t) \in N \rtimes A$ with $t \leq 1$. Then, using that $f_2 \in \mathcal{S}_0 (N)$,
\begin{align*}
 |V_{f_2} f_1 (x,t) | = \bigg| \int_N f_1(xy) D_t \check{f_2}(y^{-1}) \; d\mu_N (y) \bigg|
&\leq \int_N |f_1 (xy) - P(y) | |D_t \check{f_2} (y^{-1}) |\; d\mu_N (y).
\end{align*}
Applying \eqref{eq:global_taylor} thus gives
\begin{align*}
 |V_{f_2} f_1 (x,t) | &\lesssim \| f_1 \|_{K+M' + 1} (1+|x|)^{-K} t^{-Q/2}
 \sum_{\substack{|\alpha| \leq M' + 1 \\ [\alpha] > M}} \int_N |y|^{[\alpha]} |\check{f_2} (t^{-1} y^{-1})| (1+|y|)^K \; d\mu_N (y) \\
 &= \| f_1 \|_{K+M' + 1} (1+|x|)^{-K} t^{Q/2}
 \sum_{\substack{|\alpha| \leq M' + 1 \\ [\alpha] > M}} \int_N |ty|^{[\alpha]} |\check{f_2} ( y^{-1}) |(1+|ty|)^K \; d\mu_N (y) \\
 &\lesssim \| f_1 \|_{K+M' + 1} (1+|x|)^{-K} t^{Q/2+M}
 \int_N |f_2(y)| (1+|y|)^{K+Q(M'+1)} \; d\mu_N (y), \numberthis \label{eq:last_integral}
\end{align*}
where the last inequality uses $[\alpha] \leq Q|\alpha| \leq Q(M' + 1)$.
The integral in \eqref{eq:last_integral} can be estimated by
\begin{align*}
\int_N |f_2(y)| (1+|y|)^{K+Q(M'+1)} \; d\mu_N (y) &\leq \| f_2 \|_{K + Q(M'+1) + Q+1} \int_N (1+|y|)^{-Q-1} \; d\mu_N (y) \\
&\lesssim \| f_2 \|_{K + Q(M'+1) + Q+1}, \numberthis \label{eq:finite_integral}
\end{align*}
where convergence of the integral follows by using polar coordinates  \cite[Proposition 1.15]{folland1982hardy}; see also \cite[Corollary 1.17]{folland1982hardy}.
 A combination of \eqref{eq:finite_integral} and \eqref{eq:last_integral} yields
 the desired claim \eqref{eq:smallT}.

(ii) Note that $|V_{f_2} f_1 (x, t)| = |V_{f_1} f_2 ((x,t)^{-1})|$ for $(x, t) \in N \rtimes A$. Hence, if $t \geq 1$, then it follows by part (i) with $M_0 := M + K$ that
\begin{align*}
|V_{f_2} f_1 (x, t)|  & \lesssim  t^{-(Q/2 + M_0)} (1 + t^{-1} |x|)^{-K} \|f _1 \|_{K+M_0'+1} \|f_2\|_{K + Q(M_0'+1) + Q + 1} \\
&\leq t^{-Q/2 - M} t^{-K} t^K (1+|x|)^{-K} \|f _1 \|_{K+M_0'+1} \|f_2\|_{K + Q(M_0'+1) + Q + 1},
\end{align*}
showing \eqref{eq:largeT}. This completes the proof.
\end{proof}

The estimates provided by Proposition
\ref{prop:pointwise} recover the well-known polynomial localisation for wavelet transforms when $N =\mathbb{R}$, see, e.g. \cite[Section 11-12]{holschneider1995wavelets}.
A similar use of the Taylor inequality  for (compactly supported) atoms can be found
 in \cite[Theorem 2.9]{folland1982hardy}.

\subsection{Analysing vectors} \label{sec:analysing}
Left Haar measure on $G$ is given by $\mu_G (x,t) = t^{-(Q+1)} d\mu_N (x)  dt$ and the modular function is given by $\Delta_G (x,t) = t^{-Q}$. The measure $\mu_G$ is used to define the Lebesgue space $L^p (G) = L^p (G, \mu_G)$ for $p \in [1,\infty]$, and $\|\cdot \|_p$ will denote the $p$-norm.

A measurable function $w : G \to [1, \infty)$ is said to be a \emph{weight} if it is submultiplicative, i.e.,
$w((x,t) (y,u)) \lesssim w(x,t) w(y,u)$ for $(x,t), (y,u) \in G$. A weight $w$ is called \emph{polynomially bounded} if
\begin{align} \label{eq:polynomial_weight}
w(x,t) \lesssim (1+|x|)^k (t^m + t^{-m'}), \quad (x,t) \in G,
\end{align}
for some $k, m, m' \geq 0$. Given such a weight $w$, the weighted Lebesgue space $L^1_w (G)$ consists of all $F \in L^1 (G)$
satisfying $\| F \|_{L^1_w} := \| F w \|_{1} < \infty$.

In \cite{feichtinger1989banach, grochenig1991describing, romero2021dual}, the space of \emph{$w$-analysing vectors} of $\pi$, defined by
\[
\mathcal{A}_w := \bigg\{ g \in L^2 (N) : V_g g \in L^1_w (G) \bigg\},
\]
plays a prominent role.

The following result
provides a simple criterion for analysing vectors:

\begin{lemma} \label{prop:normestimates}
Suppose $g \in \mathcal{S}_0 (N)$. Then
$g \in \mathcal{A}_w$ for any polynomially bounded weight function $w : G \to [1,\infty)$. In particular, the representation $\pi = \ind_A^G (1)$ is integrable.
\end{lemma}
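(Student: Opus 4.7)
The plan is to reduce the claim directly to the pointwise estimates established in Proposition \ref{prop:pointwise}. Fix $g \in \mathcal{S}_0(N)$ and a polynomially bounded weight $w$, and let $k, m, m' \geq 0$ be constants for which $w(x,t) \lesssim (1+|x|)^k (t^m + t^{-m'})$ holds. Using $d\mu_G(x,t) = t^{-(Q+1)} \, d\mu_N(x) \, dt$, I would split the defining integral of $\|V_g g\|_{L^1_w}$ at $t = 1$ and apply parts (i) and (ii) of Proposition \ref{prop:pointwise}, with parameters $K$ and $M$ chosen in terms of $k, m, m'$ and $Q$ only.

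On the region $\{t \leq 1\}$, Proposition \ref{prop:pointwise}(i) gives $|V_g g(x,t)| \lesssim t^{Q/2+M}(1+|x|)^{-K}$, and combining this with the weight bound dominates the integral by a product
\begin{equation*}
\int_0^1 \bigl(t^{M - Q/2 + m - 1} + t^{M - Q/2 - m' - 1}\bigr)\, dt \; \cdot \; \int_N (1+|x|)^{-(K-k)} \, d\mu_N(x).
\end{equation*}
Choosing $K > k + Q$ ensures the spatial integral is finite by the polar-coordinate computation recalled in the proof of Proposition \ref{prop:pointwise}, and choosing $M > Q/2 + m'$ makes the temporal integral finite near $0$. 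The region $\{t \geq 1\}$ is treated identically via Proposition \ref{prop:pointwise}(ii), which produces the bound $t^{-(Q/2+M)}(1+|x|)^{-K}$; taking $M$ larger still if necessary forces the relevant powers of $t$ to decay strictly faster than $t^{-1}$ at infinity, yielding convergence.

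For the integrability of $\pi$, the observation is that $w_0(x,t) := 1 + t^{Q/2}$ is a polynomially bounded weight in the sense of \eqref{eq:polynomial_weight}: submultiplicativity follows from the elementary inequality $(tu)^{Q/2} \leq (1+t^{Q/2})(1+u^{Q/2})$. Since $\Delta_G^{-1/2}(x,t) = t^{Q/2} \leq w_0(x,t)$ pointwise, the already-established bound $V_g g \in L^1_{w_0}(G)$ immediately yields $\Delta_G^{-1/2} V_g g \in L^1(G)$. There is no real obstacle in this argument — all the analytic work has been done in Proposition \ref{prop:pointwise} — the only bookkeeping step is choosing $K$ and $M$ large enough relative to the polynomial growth exponents of $w$, which is always possible since these exponents depend on the (fixed) weight alone.
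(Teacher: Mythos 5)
Your proposal is correct and follows essentially the same route as the paper: split the $L^1_w$-integral at $t=1$, apply Proposition \ref{prop:pointwise}(i) and (ii) with $K > k+Q$ and $M$ large relative to $m, m', Q$, and use the polar-coordinate integrability of $(1+|x|)^{-(K-k)}$. Your explicit verification that $w_0(x,t)=1+t^{Q/2}$ is a submultiplicative polynomially bounded weight dominating $\Delta_G^{-1/2}$ is a welcome elaboration of a step the paper leaves implicit, but it does not change the argument.
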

\begin{proof}
Let $k,m,m' \geq 0$ be such that $w(x,t) \lesssim (1+|x|)^k (t^m + t^{-m'})$ for all $(x,t) \in G$.
Then, choosing $K,M,M' \in \mathbb{N}$ sufficiently large, it follows by Proposition \ref{prop:pointwise} that
\begin{align*}
\| V_g g \|_{L^1_w} &\lesssim \int_0^{\infty} \int_N V_{g} g (x,t) (1+|x|)^k (t^m + t^{-m'}) \; d\mu_N (x) \frac{dt}{t^{Q+1}} \\
&\lesssim \int_0^1 t^{Q/2 + M' - m'} t^{-(Q+1)} dt + \int_1^{\infty} t^{-(Q/2 + M)  + m} t^{-(Q+1)} dt < \infty.
\end{align*}
This shows that $g \in \mathcal{A}_w$, and thus $\pi$ is $w$-integrable.
\end{proof}

\section{Admissible vectors} \label{sec:admissible_vectors}
A vector $g \in L^2 (N)$ is said to be \emph{admissible} for the quasi-regular representation $(\pi, L^2 (N))$ if the map
\[
V_{g} : L^2 (N) \to L^{\infty}(G), \quad f \mapsto \langle f, \pi(\cdot) g \rangle
\]
is an isometry into $L^2 (G)$.

\subsection{Reproducing formulae}
The following observation relates admissibility to a Calder\'on-type reproducing formula.

\begin{lemma} \label{lem:admissible_Schwartz}
Let $g \in \mathcal{S} (N)$ with $\int_N g (x) \; d\mu_N (x) = 0$. Then $g$ is admissible if, and only if,
\begin{align} \label{eq:calderon_type}
f = \int_0^{\infty} f \ast \check{g}_t \ast g_t \; \frac{dt}{t} \equiv \lim_{\varepsilon \to 0, \rho \to \infty} \int_{\varepsilon}^{\rho} f \ast \check{g}_t \ast g_t \; \frac{dt}{t}, \quad f \in \mathcal{S} (N),
\end{align}
with convergence in $\mathcal{S}' (N)$.
\end{lemma}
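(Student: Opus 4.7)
The plan is to translate both directions of the equivalence into a single Littlewood-Paley identity via the convolution form of $V_g$. A direct substitution using $\pi(x,t) g(y) = t^{-Q/2} g(t^{-1}(x^{-1}y))$ together with the identity $\check{g}_t(y^{-1}x) = t^{-Q}\overline{g(t^{-1}(x^{-1}y))}$ yields the fundamental relation
\[
V_g f(x,t) \;=\; t^{Q/2}\,(f \ast \check{g}_t)(x), \qquad (x,t) \in G.
\]
Integrating $|V_g f|^2$ against the Haar measure $d\mu_G(x,t) = t^{-(Q+1)}\,d\mu_N(x)\,dt$ cancels the factor $t^Q$ and gives
\[
\|V_g f\|_{L^2(G)}^2 \;=\; \int_0^\infty \|f \ast \check{g}_t\|_2^2\,\frac{dt}{t}, \qquad f \in L^2(N),
\]
so admissibility is equivalent to the Littlewood-Paley identity $\int_0^\infty \|f \ast \check{g}_t\|_2^2\,dt/t = \|f\|_2^2$ for every $f \in L^2(N)$.

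Next I will rewrite this identity in Calder\'on form. Since $N$ is unimodular, the convolution involution is $\varphi^*(y) := \overline{\varphi(y^{-1})}$, and a short direct calculation gives $(\check{g}_t)^* = g_t$. Polarising the Littlewood-Paley identity and using this involution rewrites admissibility as
\[
\int_0^\infty \langle f \ast \check{g}_t \ast g_t,\, h\rangle \,\frac{dt}{t} \;=\; \langle f, h\rangle, \qquad f, h \in L^2(N),
\]
which, upon restriction to $f,h \in \mathcal{S}(N)$ and a standard Fubini argument on each truncation $[\varepsilon, \rho]$, is exactly the pairing of the right-hand side of \eqref{eq:calderon_type} with the test function $h$.

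It remains to justify the two directions. For the ``only if'' direction, Cauchy-Schwarz applied to the admissibility identity yields
$\int_0^\infty |\langle f \ast \check{g}_t, h \ast \check{g}_t\rangle|\,dt/t \leq \|f\|_2 \|h\|_2$,
which supplies the integrable majorant needed to pass to the limits $\varepsilon \to 0$ and $\rho \to \infty$ and deduce \eqref{eq:calderon_type} in $\mathcal{S}'(N)$. For the ``if'' direction, testing \eqref{eq:calderon_type} against $h = f \in \mathcal{S}(N)$ yields $\lim_{\varepsilon,\rho} \int_\varepsilon^\rho \|f \ast \check{g}_t\|_2^2\,dt/t = \|f\|_2^2$, and the nonnegativity of the integrand upgrades this principal-value limit to the full integral by monotone convergence. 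Hence $\|V_g f\|_{L^2(G)} = \|f\|_2$ on the dense subspace $\mathcal{S}(N) \subset L^2(N)$, and a standard approximation argument using the contraction bound $\|V_g f\|_{L^\infty(G)} \leq \|g\|_2 \|f\|_2$ extends this isometry to all of $L^2(N)$.

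The main technical subtlety is the asymmetric nature of the principal-value limit in \eqref{eq:calderon_type}: nonnegativity (via the diagonal choice $h = f$) promotes the principal-value limit to the full integral in the converse direction, while Cauchy-Schwarz (applied to the admissibility identity) supplies the absolute convergence in the forward direction. The hypothesis $\int_N g\,d\mu_N = 0$ is not invoked explicitly in these manipulations, but reflects the cancellation naturally required for such a reproducing formula to hold at all.
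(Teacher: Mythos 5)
Your proof is correct, but it takes a genuinely different route from the paper's. The paper first invokes Folland--Stein [Theorem 1.65] to show that the truncated kernels $H_{\varepsilon,\rho}=\int_\varepsilon^\rho \check g_t\ast g_t\,\frac{dt}{t}$ converge in $\mathcal S'(N)$ to a $(-Q)$-homogeneous distribution $H$ (this is the one place where the hypothesis $\int_N g\,d\mu_N=0$ is used), and then Folland--Stein [Theorem 6.19] to get $L^2$-boundedness of $f\mapsto f\ast H$; admissibility is then read off as $\langle f\ast H,f\rangle=\langle f,f\rangle$, and the equivalence with \eqref{eq:calderon_type} follows by polarisation. You bypass the limit kernel $H$ entirely: you work with the square-function identity $\|V_gf\|_{L^2(G)}^2=\int_0^\infty\|f\ast\check g_t\|_2^2\,\frac{dt}{t}$ and handle the two directions asymmetrically, using Cauchy--Schwarz on the admissibility identity to get absolute convergence (hence $\mathcal S'$-convergence of the truncations) in the forward direction, and positivity plus monotone convergence to upgrade the principal-value limit in the converse direction; the extension from $\mathcal S(N)$ to $L^2(N)$ then follows from the standard density argument via the bound $\|V_gf\|_\infty\le\|f\|_2\|g\|_2$. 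Your observation that the mean-zero hypothesis is not logically needed for the equivalence is accurate; the paper needs it only to feed the Folland--Stein machinery. What the paper's approach buys in exchange is the a priori existence of $H$ and the identity \eqref{eq:admissible_schwartz} for \emph{all} $f\in L^2(N)$ even when $g$ is not admissible, which is structurally useful elsewhere; your argument is more elementary and self-contained for the stated equivalence. The only points you should spell out in a final write-up are the (easy) justification that on a compact interval $[\varepsilon,\rho]$ the map $t\mapsto f\ast\check g_t\ast g_t$ is continuous into $\mathcal S(N)$ so that the truncated integral is a Bochner integral and the pairing with $h$ commutes with it, and the adjoint identity $\langle f\ast\check g_t\ast g_t,h\rangle=\langle f\ast\check g_t,h\ast\check g_t\rangle$ coming from $(\check g_t)^{*}=g_t$.
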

\begin{proof}
 Under the assumptions on $g$, it follows by \cite[Theorem 1.65]{folland1982hardy} that
\[
H_{\varepsilon, \rho} (z) := \int_{\varepsilon}^{\rho} \check{g}_t \ast g_t (z) \; \frac{dt}{t}, \quad z \in N,
 \]
converges in $\mathcal{S}'(N)$ to a distribution $H := \lim_{\varepsilon \to 0, \rho \to \infty} H_{\varepsilon, \rho}$ which is smooth on $N \setminus \{e_N\}$ and homogeneous of degree $-Q$. Let $f \in \mathcal{S} (N)$. Then
 \begin{align*}
\big\| V_g f \big\|_2^2 &=  \lim_{\varepsilon \to 0, \rho \to \infty} \int_{\varepsilon}^{\rho} \int_N | f \ast D_t \check{g} (x) |^2 \; d\mu_G (x,t) \\
 &= \lim_{\varepsilon \to 0, \rho \to \infty} \int_{\varepsilon}^{\rho} \int_N \int_N \int_N
 f(y) \check{g_t} (y^{-1} x) \overline{\check{g_t} (z^{-1} x) f(z)} \;d\mu_N (z) d\mu_N (y) d\mu_N (x) \frac{dt}{t} \\
 &= \lim_{\varepsilon \to 0, \rho \to \infty} \int_{\varepsilon}^{\rho} \int_N \int_N
 f(y) \check{g_t} \ast g_t (y^{-1} z) \overline{f(z)}  \; d\mu_N (y) d\mu_N (z)  \frac{dt}{t} \\
 &= \lim_{\varepsilon \to 0, \rho \to \infty}  \int_N f \ast H_{\varepsilon,\rho} (z) \overline{f(z)} \; d\mu_N (z) \\
 &= \int_N f \ast H(z) \overline{f(z)} \; d\mu_N (z),
 \end{align*}
where the last equality used that $f \ast H_{\varepsilon, \rho} \to f \ast H$ in $\mathcal{S}' (N)$ as $\varepsilon \to 0$ and $\rho \to \infty$.

The map  $f \mapsto f \ast H$ is bounded on $L^2 (N)$ by \cite[Theorem 6.19]{folland1982hardy}. Hence $V_g : \mathcal{S} (N) \to L^2 (G)$ is well-defined,
and it follows that
\begin{align} \label{eq:admissible_schwartz}
 \int_G | \langle f, \pi(x,t) g \rangle |^2 \; d\mu_G (x,t) = \langle f \ast H, f \rangle, \quad f \in L^2 (N).
 \end{align}
Thus $g$ is admissible if, and only if, $\langle f \ast H, f \rangle = \langle f, f \rangle$ for all $f \in L^2 (N)$. Polarisation yields that this is equivalent to \eqref{eq:calderon_type}, which completes the proof.
\end{proof}

The calculations in the proof of Lemma \ref{lem:admissible_Schwartz} are classical, see, e.g. \cite[Theorem 7.7]{folland1982hardy}.

\subsection{Rockland operators} This section provides background on spectral multipliers for Rockland operators, see, e.g. \cite[Chapter 4]{fischer2016quantization} for a detailed account. The stated results will be used in Section \ref{sec:admissible_schwartz} below for the construction of
admissible vectors.

Let $\mathcal{L} \in \mathcal{D}(N)$ be positive and formally self-adjoint.
Then $\mathcal{L}$ is essentially self-adjoint on $L^2 (N)$, and $\mathcal{L}$ will also denote its self-adjoint extension. Let $E_{\mathcal{L}}$ be the spectral measure of $\mathcal{L}$.
For $m \in L^{\infty} (\mathbb{R}_0^+)$, the operator
\[
m(\mathcal{L}) := \int_{\mathbb{R}_0^+} m(\lambda) \; dE_{\mathcal{L}}(\lambda)
\]
is a left-invariant bounded linear operator on $L^2 (N)$. By the Schwartz kernel theorem,
the action of $m(\mathcal{L})$ on $\mathcal{S} (N)$ is given by
\[
m(\mathcal{L}) f = f \ast K_{m (\mathcal{L})}, \quad f \in \mathcal{S} (N),
\]
where $K_{m(\mathcal{L})} \in \mathcal{S}'(N)$ is the associated convolution kernel.

A \emph{Rockland operator} is a homogeneous differential operator $\mathcal{L} \in \mathcal{D} (N)$ of positive degree that is hypoelliptic, i.e. for every distribution $f \in (C_c^{\infty} (N))'$ and every open set $U \subseteq N$,  the condition $(\mathcal{L} f)|_U \in C^{\infty} (U)$ implies that $f|_U \in C^{\infty} (U)$.  Positive Rockland operators are well-known to exist on any graded Lie group.

The following theorem is the key result  used to construct admissible Schwartz functions.

\begin{theorem}[Hulanicki \cite{hulanicki1984functional}] \label{thm:hulanicki}
Let $N$ be a graded Lie group.
Let $\mathcal{L} \in \mathcal{D}(N)$ be a positive Rockland operator and
let $|\cdot| : N \to [0, \infty)$ be a fixed homogeneous quasi-norm on $N$.

For any $M_1 \in \mathbb{N}$, $M_2 \geq 0$, there exist $C = C(M_1, M_2) > 0$ and $k = k(M_1, M_2), k' = k'(M_1, M_2) \in \mathbb{N}_0$ such that, for any $m \in C^k (\mathbb{R}_0^+)$,
the kernel $K_{m(\mathcal{L})}$ of $m(\mathcal{L})$ satisfies
\[
\sum_{[\alpha] \leq M_1} \int_G | X^{\alpha} K_{m(\mathcal{L})} (x)| (1+|x|)^{M_2} \; d\mu_N (x) \leq C
\sup_{\substack {\lambda > 0 \\
\ell = 0, ..., k \\
\ell' = 0, ..., k'}} (1+\lambda)^{\ell'} | \partial_{\lambda}^{\ell} m (\lambda)|.
\]
\end{theorem}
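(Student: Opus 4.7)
The plan is to reduce the weighted $L^1$-bound on the derivatives $X^\alpha K_{m(\mathcal{L})}$ to uniform estimates on the heat kernel of $\mathcal{L}$, combined with a Laplace-type representation of the general multiplier $m$. Throughout, let $\nu > 0$ denote the homogeneous degree of $\mathcal{L}$ and $h_s := K_{e^{-s\mathcal{L}}}$ the heat-kernel at time $s > 0$, which exists as a convolution kernel because $e^{-s\mathcal{L}}$ is a left-invariant contraction on $L^2(N)$.

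The first step is the analytic core: establishing that $h_1 \in \mathcal{S}(N)$ and, more precisely, that for every multi-index $\alpha$ and every $N_1 \in \mathbb{N}_0$ one has $\int_N |X^\alpha h_1(x)|(1+|x|)^{N_1}\, d\mu_N(x) \le C_{\alpha, N_1}$. The Rockland hypothesis enters here, since it is equivalent (via Helffer--Nourrigat) to hypoellipticity of the parabolic operator $\partial_s + \mathcal{L}$, which in turn yields smoothness of $h_s$ and, via standard parabolic off-diagonal estimates, the polynomial decay in $|x|$. Using the dilation identity $h_s(x) = s^{-Q/\nu}\, h_1(\delta_{s^{-1/\nu}}(x))$, which follows from the homogeneity of $\mathcal{L}$, one derives a uniform-in-$s$ bound of the form
\[
\int_N |X^\alpha h_s(x)|\,(1+|x|)^{M_2}\, d\mu_N(x) \;\le\; C_{\alpha, M_2}\, s^{-[\alpha]/\nu}\bigl(1 + s^{1/\nu}\bigr)^{M_2}.
\]

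The second step is to write $m(\lambda) = \int_0^\infty \phi(s)\, e^{-s\lambda}\, ds$ for an auxiliary function $\phi$ obtained from $m$ by complex Laplace inversion (or, equivalently, by an $H^\infty$-functional calculus on a sector). Repeated integration by parts then converts the assumed bound on $(1+\lambda)^{\ell'}\, \delta_\lambda^\ell m(\lambda)$ for $\ell \le k$, $\ell' \le k'$ into quantitative decay of $\phi$ at $s \to 0^+$ and $s \to \infty$; choosing $k, k'$ large enough relative to $M_1, M_2, \nu, Q$ ensures that $\phi$ is integrable against $s^{-[\alpha]/\nu}(1 + s^{1/\nu})^{M_2}\, ds$ for every $[\alpha] \le M_1$. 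Combining via $K_{m(\mathcal{L})} = \int_0^\infty \phi(s)\, h_s\, ds$, Fubini, and the two estimates above yields the claimed inequality, with the supremum on the right-hand side emerging from the control of $\phi$.

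The principal obstacle is Step 1: the polynomially weighted integrability of $X^\alpha h_1$ for every $\alpha$. On stratified groups this goes back to Folland--Stein through a combination of Taylor's inequality and subelliptic energy estimates applied to a homogeneous sub-Laplacian. On a merely graded (not necessarily stratified) group, the sub-Laplacian need not be homogeneous, and one must replace the geometric argument with the pseudodifferential/functional-analytic argument due to Hulanicki; the Rockland hypothesis is precisely the input that restores a global hypoellipticity uniformly in $s$, and hence the sub-Gaussian decay of $h_1$. The Laplace representation in Step 2 is by comparison a soft bookkeeping argument, and could be replaced by a Mellin transform or a Bochner subordination formula without any loss.
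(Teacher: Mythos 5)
First, note that the paper does not prove this theorem at all: it is quoted as a known result of Hulanicki \cite{hulanicki1984functional} (see also Fischer--Ruzhansky, Chapter 4), so any argument you give is necessarily "a different route from the paper". Your Step 1 is, in substance, the correct known ingredient: the kernel of $e^{-s\mathcal{L}}$ is Schwartz, the scaling identity $h_s = (h_1)_{s^{1/\nu}}$ follows from homogeneity, and the uniform weighted bound $s^{-[\alpha]/\nu}(1+s^{1/\nu})^{M_2}$ is a correct consequence. (Be aware, though, that hypoellipticity only gives local smoothness; the global Schwartz decay of $h_1$ for a higher-order Rockland operator is itself a substantial theorem and is not delivered by "standard parabolic off-diagonal estimates", there being no maximum principle in this setting.)

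The genuine gap is Step 2, which you dismiss as soft bookkeeping but which is where the argument breaks. A representation $m(\lambda) = \int_0^\infty \phi(s)\,e^{-s\lambda}\,ds$ forces $m$ to extend holomorphically to the right half-plane; the theorem, however, is stated for $m \in C^k(\mathbb{R}_0^+)$ with only finitely many derivatives controlled, and such an $m$ is in general nowhere analytic. The same objection applies to the proposed $H^\infty$-calculus on a sector and to Bochner subordination (which produces specific analytic functions, not arbitrary $C^k$ multipliers). So the Laplace inversion you invoke does not exist, and no choice of $k,k'$ repairs this: the whole point of Hulanicki's theorem is precisely that \emph{finite} smoothness suffices. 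The reductions that actually work are of a different nature: Hulanicki substitutes $x = e^{-\lambda}$, writes $m(\lambda) = G(e^{-\lambda})$, and approximates $G$ by polynomials with Jackson-type rates, so that $m(\mathcal{L})$ is approximated by finite linear combinations of $e^{-j\mathcal{L}}$ whose kernels $h_1^{\ast j}$ have weighted norms growing only polynomially in $j$; alternatively one decomposes $m$ dyadically, expands each piece $m(\lambda)e^{\lambda}$ in a Fourier series with coefficients $O((1+|n|)^{-k})$, and uses polynomial-in-$n$ bounds on the complex-time heat kernels $K_{e^{(in\tau_0-1)\mathcal{L}}}$. Either of these converts finite smoothness of $m$ into summability, which is exactly what your Laplace route cannot do. As written, the proposal does not establish the theorem.
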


\begin{corollary} \label{cor:hulanicki}
Let $\mathcal{L} \in \mathcal{D}(N)$ be a positive Rockland operator.
\begin{enumerate}[(i)]
\item If $m \in \mathcal{S}(\mathbb{R}_0^+)$, then $K_{m(\mathcal{L})} \in \mathcal{S}(N)$.
\item If $m \in \mathcal{S}(\mathbb{R}_0^+)$ vanishes near the origin, then $K_{m(\mathcal{L})} \in \mathcal{S}_0(N)$.
\end{enumerate}
\end{corollary}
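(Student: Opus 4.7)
\emph{Proof plan.} The plan is to deduce both assertions from Theorem~\ref{thm:hulanicki}.

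For (i), I would fix $K \in \mathbb{N}_0$ and apply Theorem~\ref{thm:hulanicki} with $M_1, M_2 \in \mathbb{N}$ chosen sufficiently large that $|\alpha| \leq K$ forces $[\alpha] \leq M_1$ and $M_2 \geq K$. Since $m \in \mathcal{S}(\mathbb{R}_0^+)$ makes the supremum on the right-hand side of the theorem finite for every choice of $k, k'$, this yields
\[
\sum_{[\alpha] \leq M_1} \int_N |X^\alpha K_{m(\mathcal{L})}(x)| (1+|x|)^{M_2} \, d\mu_N(x) < \infty.
\]
To promote these weighted $L^1$ bounds on left-invariant derivatives of $K_{m(\mathcal{L})}$ to the pointwise Schwartz semi-norm $\|K_{m(\mathcal{L})}\|_{\mathcal{S},K} < \infty$, I would invoke a standard $L^1$-Sobolev embedding in exponential coordinates -- in which $X_1, \ldots, X_d$ appear as polynomial-coefficient first-order operators -- applied to $(1+|\cdot|)^K K_{m(\mathcal{L})}$ together with the Leibniz rule. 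Since $K$ was arbitrary, this gives $K_{m(\mathcal{L})} \in \mathcal{S}(N)$.

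For (ii), the idea is to exploit a factorisation $m(\lambda) = \lambda^k n_k(\lambda)$. When $m$ vanishes on a neighbourhood $[0, \varepsilon]$ of the origin, the function $n_k(\lambda) := \lambda^{-k} m(\lambda)$, extended by $0$ at the origin, belongs to $\mathcal{S}(\mathbb{R}_0^+)$ for every $k \in \mathbb{N}$. The functional calculus then gives $m(\mathcal{L}) = \mathcal{L}^k n_k(\mathcal{L})$, and by left-invariance of $\mathcal{L}$ this translates into the kernel identity $K_{m(\mathcal{L})} = \mathcal{L}^k K_{n_k(\mathcal{L})}$. Part (i) applied to $n_k$ yields $K_{n_k(\mathcal{L})} \in \mathcal{S}(N)$, so $K_{m(\mathcal{L})} \in \mathcal{S}(N)$ as well. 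To verify the vanishing-moment condition, I would let $P$ be a polynomial on $N$ and choose $k$ so large that $\mathcal{L}^k P \equiv 0$; this is possible because $\mathcal{L}$ is homogeneous of some fixed positive degree, so each application strictly decreases the homogeneous degree of any non-zero polynomial. Integration by parts against $P$ is justified by the rapid decay of $K_{n_k(\mathcal{L})}$, and the formal self-adjointness of $\mathcal{L}$ then gives
\[
\int_N P(x) K_{m(\mathcal{L})}(x) \, d\mu_N(x) = \int_N (\mathcal{L}^k P)(x) K_{n_k(\mathcal{L})}(x) \, d\mu_N(x) = 0.
\]
Specialising to $P(x) = x^\alpha$ for each multi-index $\alpha$ finishes the proof.

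The hard part will be the Sobolev-type step in (i) which converts Hulanicki's weighted $L^1$ bounds to the pointwise Schwartz decay appearing in the definition of $\mathcal{S}(N)$; once this is granted, (ii) reduces to the factorisation $m = \lambda^k \cdot n_k$ combined with the elementary observation that a homogeneous left-invariant differential operator of positive degree eventually annihilates every polynomial.
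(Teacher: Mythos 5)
Your plan is correct, and in fact the paper offers no proof of this corollary at all --- it is stated as an immediate consequence of Theorem~\ref{thm:hulanicki}, implicitly deferring to the literature (e.g.\ \cite{fischer2016quantization, hulanicki1984functional, geller2006continuous}), so your argument fills in exactly the standard derivation. For (i), the passage from Hulanicki's weighted $L^1$ bounds on $X^{\alpha}K_{m(\mathcal{L})}$ to the $\sup$-type seminorms $\|\cdot\|_{\mathcal{S},K}$ is legitimate: a local Sobolev embedding on a unit ball, transported by left translation (hence with constants uniform in the base point), together with $(1+|x|)\asymp(1+|y|)$ for $y$ in a bounded neighbourhood of $x$, converts the weighted $L^1$ estimates into weighted $L^{\infty}$ estimates; equivalently, the $L^1$- and $L^{\infty}$-based families of Schwartz seminorms on a homogeneous group are equivalent. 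For (ii), the factorisation $m(\lambda)=\lambda^k n_k(\lambda)$ with $n_k\in\mathcal{S}(\mathbb{R}_0^+)$, the kernel identity $K_{m(\mathcal{L})}=\mathcal{L}^k K_{n_k(\mathcal{L})}$ (valid since $\mathcal{L}$ is left-invariant and acts on the convolution kernel from the appropriate side), and the observation that a left-invariant operator homogeneous of degree $\nu>0$ maps $\mathcal{P}_M$ into $\mathcal{P}_{M-\nu}$, hence annihilates every polynomial after finitely many applications, is precisely the standard route to the vanishing moments; the integration by parts is justified by the Schwartz decay of $K_{n_k(\mathcal{L})}$ and the formal self-adjointness of $\mathcal{L}$.
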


\subsection{Existence of admissible vectors} \label{sec:admissible_schwartz}
The following result yields a class of Schwartz vectors that are admissible.

\begin{proposition} \label{prop:admissible_existence}
Let $N$ be a graded Lie group and let $\mathcal{L} \in \mathcal{D} (N)$ be a positive Rockland operator of degree $\nu$. Let $K_{m (\mathcal{L})}$ be the convolution kernel of a multiplier $m \in \mathcal{S} (\mathbb{R}_0^+)$ satisfying
\begin{align} \label{eq:admissible_r1}
\int_0^{\infty} | m (t)|^2 \; \frac{dt}{t} = \nu.
\end{align}
Then $g := K_{m (\mathcal{L})} \in \mathcal{S} (N)$ is an admissible vector for
 $\pi = \ind^{N \rtimes A}_A (1)$.
\end{proposition}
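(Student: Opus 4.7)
The plan is to appeal directly to Lemma \ref{lem:admissible_Schwartz}, so the main task is to establish the Calder\'on-type identity
\[
f = \int_0^\infty f \ast \check{g}_t \ast g_t\; \frac{dt}{t}
\]
for $g = K_{m(\mathcal{L})}$ and $f \in \mathcal{S}(N)$, with convergence in $\mathcal{S}'(N)$. First, Corollary \ref{cor:hulanicki}(ii) ensures $g \in \mathcal{S}_0(N)$, so in particular $g \in \mathcal{S}(N)$ with $\int_N g\, d\mu_N = 0$; thus the hypotheses of Lemma \ref{lem:admissible_Schwartz} are met and it suffices to establish the reproducing formula.

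The key step is a homogeneity computation identifying $g_t$ as a spectral multiplier kernel. Since $\mathcal{L}$ is homogeneous of degree $\nu$, one has $\mathcal{L}(h \circ \delta_s) = s^\nu (\mathcal{L} h) \circ \delta_s$ for $h \in \mathcal{S}(N)$. By functional calculus this lifts to $m(\mathcal{L})(h \circ \delta_s) = (m(s^\nu \mathcal{L}) h) \circ \delta_s$. Writing the convolution explicitly and changing variables (using that $\delta_{s^{-1}}$ is an automorphism) shows that the convolution kernel of $m(t^\nu \mathcal{L})$ is precisely $g_t = t^{-Q} g \circ \delta_{t^{-1}}$. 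Similarly, since the convolution kernel of the adjoint $\overline{m}(t^\nu \mathcal{L})$ is $\check{g}_t$, one obtains for any $f \in \mathcal{S}(N)$
\[
f \ast \check{g}_t \ast g_t = |m|^2(t^\nu \mathcal{L}) f.
\]

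It remains to integrate this identity in $t$. By the change of variable $u = t^\nu$, one gets $\int_0^\infty |m|^2(t^\nu \lambda)\, \frac{dt}{t} = \frac{1}{\nu}\int_0^\infty |m|^2(u \lambda)\, \frac{du}{u}$, and a further change $v = u\lambda$ yields the value $\frac{1}{\nu} \cdot \nu = 1$ for every $\lambda > 0$, by the normalisation \eqref{eq:admissible_r1}. Spectral calculus then gives $\int_0^\infty |m|^2(t^\nu \mathcal{L})\, \frac{dt}{t} = E_\mathcal{L}((0,\infty)) = \mathrm{Id}_{L^2(N)}$, using that $E_\mathcal{L}(\{0\}) = 0$ for a positive Rockland operator on a graded group (the null space of $\mathcal{L}$ in $L^2(N)$ is trivial).

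The principal technical point is to upgrade this spectral identity into the distributional convergence required by Lemma \ref{lem:admissible_Schwartz}. I would follow the same bilinear template as in that proof: for $f \in \mathcal{S}(N)$ and $\varphi \in \mathcal{S}(N)$, write
\[
\int_\varepsilon^\rho \langle f \ast \check{g}_t \ast g_t,\, \varphi\rangle\, \frac{dt}{t} = \Big\langle \Big(\int_\varepsilon^\rho |m|^2(t^\nu \mathcal{L})\, \frac{dt}{t}\Big) f,\, \varphi \Big\rangle,
\]
where the right-hand side is interpreted via spectral calculus on $L^2(N)$. Since the scalar integrand $\int_\varepsilon^\rho |m|^2(t^\nu \lambda)\, \frac{dt}{t}$ is uniformly bounded in $\lambda > 0$ and converges pointwise to $1$ on $(0,\infty)$ as $\varepsilon \to 0$, $\rho \to \infty$, dominated convergence in the spectral integral gives strong $L^2$-convergence of the operators to the identity. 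Pairing with $\varphi \in \mathcal{S}(N) \subset L^2(N)$ yields the desired $\mathcal{S}'(N)$-convergence. The likely subtle point is ensuring the spectral-side reasoning is legitimate for $f \in \mathcal{S}(N)$ (not just $L^2$) and that $E_\mathcal{L}(\{0\}) = 0$ is properly invoked; but both are standard consequences of Hulanicki's theorem and the functional calculus for positive Rockland operators. This establishes \eqref{eq:calderon_type}, completing the proof by Lemma \ref{lem:admissible_Schwartz}.
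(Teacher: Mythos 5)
Your proposal is correct and follows essentially the same route as the paper: identify $\check{g}_t \ast g_t$ as the convolution kernel of $|m|^2(t^{\nu}\mathcal{L})$ via the spectral theorem and the homogeneity of $\mathcal{L}$, verify the scalar Calder\'on condition $\int_0^{\infty} |m(t^{\nu}\lambda)|^2 \, \frac{dt}{t} = 1$ for $\lambda>0$, and conclude via Lemma \ref{lem:admissible_Schwartz}. Your explicit observation that $E_{\mathcal{L}}(\{0\})=0$ addresses a point the paper leaves implicit (there the integrand simply vanishes at $\lambda=0$ because $m$ vanishes near the origin), but this does not change the substance of the argument.
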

\begin{proof}
Let $m \in \mathcal{S} (\mathbb{R}_0^+)$ be as in the statement, so that
\begin{align} \label{eq:admissible_r2}
\int_0^{\infty} |m(\lambda t^{\nu} )|^2 \; \frac{dt}{t} = \frac{1}{\nu} \int_0^{\infty} | m (t)|^2 \; \frac{dt}{t} = 1, \quad \text{for all} \;\; \lambda > 0.
\end{align}
By Corollary \ref{cor:hulanicki}, $g := K_{m(\mathcal{L})} \in \mathcal{S} (N)$,
and it suffices to show the reproducing formula \eqref{eq:calderon_type}.
Define $H_{\varepsilon, \rho} := \int_{\varepsilon}^{\rho} \check{g_t} \ast g_t \; t^{-1} dt$ for $0<\varepsilon < \rho < \infty$. Let $f_1, f_2 \in \mathcal{S}(N)$. Then
\begin{align} \label{eq:f1Kf2}
\langle f_1 \ast H_{\varepsilon, \rho} , f_2 \rangle &= \int_{\varepsilon}^{\rho} \langle f_1 \ast \check{g_t} \ast g_t, f_2 \rangle \; \frac{dt}{t} = \int_{\varepsilon}^{\rho} \langle f_1 \ast (\check{g} \ast g )_t, f_2 \rangle \; \frac{dt}{t}.
\end{align}
The spectral theorem implies that $\check{g} \ast g = K_{\overline{m} (\mathcal{L})} \ast  K_{m (\mathcal{L})} = K_{|m|^2 (\mathcal{L})}$. In addition, the homogeneity of $\mathcal{L}$ yields that
$
(\check{g} \ast g)_t = K_{|m|^2 (t^{\nu} \mathcal{L})}
$
for all $t > 0$, see, e.g. \cite[Corollary 4.1.16]{fischer2016quantization}. Combining this with \eqref{eq:f1Kf2} gives
\begin{align*}
\langle f_1 \ast H_{\varepsilon, \rho} , f_2 \rangle
& = \int_{\varepsilon}^{\rho} \big\langle |m|^2 (t^{\nu} \mathcal{L}) f_1 , f_2 \big\rangle \; \frac{dt}{t}
= \int_{\varepsilon}^{\rho} \int_0^{\infty}  |m( t^{\nu} \lambda) |^2 \;  d \langle E_{\mathcal{L}} (\lambda) f_1, f_2 \rangle \frac{dt}{t} \\
&=  \int_0^{\infty}  \int_{\varepsilon}^{\rho} |m( t^{\nu} \lambda) |^2 \;  \frac{dt}{t}  d \langle E_{\mathcal{L}} (\lambda) f_1, f_2 \rangle .
\end{align*}
Hence, by the identity \eqref{eq:admissible_r2},
\[
\lim_{\varepsilon \to 0, \rho \to \infty} \langle f_1 \ast H_{\varepsilon, \rho} , f_2 \rangle
= \int_0^{\infty}  \int_{0}^{\infty} |m( t^{\nu} \lambda) |^2 \;  \frac{dt}{t}  d \langle E_{\mathcal{L}} (\lambda) f_1, f_2 \rangle = \langle f_1, f_2 \rangle.
\]
An application of Lemma \ref{lem:admissible_Schwartz} therefore yields that $g$ is admissible.
\end{proof}

Spectral multipliers for sub-Laplacians on stratified groups were used for constructing admissible vectors in \cite{geller2006continuous}. See also \cite{furioli2006littlewood} for similar discrete Littlewood-Paley decompositions.

\begin{remark} \label{rem:nonhomogeneous}
The use of a \emph{homogeneous} operator is essential in the proof of Proposition \ref{prop:admissible_existence}  to guarantee that the spectral dilates $m(t \cdot ) $, $t > 0$, of a multiplier $m \in \mathcal{S} (\mathbb{R}^+_0)$ yield a convolution kernel $K_{m(t \mathcal{L})}$ that is compatible with automorphic dilations $\{\delta_t \}_{t >0}$.
For non-homogeneous operators, other techniques seem required, see, e.g.   \cite{calzi2020functional, nagel1990harmonic}.
\end{remark}

\subsection{Proof of Theorem \ref{thm:intro}}
Theorem \ref{thm:intro} follows from combining Lemma \ref{prop:normestimates}, Corollary \ref{cor:hulanicki} and Proposition \ref{prop:admissible_existence}.

\section*{Acknowledgment}
J.v.V. gratefully acknowledges support from the Research
Foundation - Flanders (FWO) Odysseus 1 grant G.0H94.18N and the Austrian Science Fund (FWF) project J-4445. Thanks are due to  T. Bruno, M. Calzi and D. Rottensteiner for helpful discussions.

\end{document}